\newcommand\blfootnote[1]{%
  \begingroup
  \renewcommand\thefootnote{}\footnote{#1}%
  \addtocounter{footnote}{-1}%
  \endgroup
}
\newcommand{\ZZ}{\mathbb{Z}}
\newcommand{\Aut}{\mathrm{Aut}}
\newcommand{\G}{\Gamma}
\newcommand{\Cay}{\mathop{\rm Cay}}
\newcommand{\la}{\langle}
\newcommand{\ra}{\rangle}
\newcommand{\HTG}{\mathop{\rm HTG}}
\renewcommand{\Re}{\mathcal{R}}
\newcommand{\Bl}{\mathcal{B}}
\newcommand{\Gr}{\mathcal{G}}
\newcommand{\GPr}{\mathop{\rm GPr}}
\newtheorem{theorem}{Theorem}[section]
\newtheorem{proposition}[theorem]{Proposition}
\newtheorem{corollary}[theorem]{Corollary}
\newtheorem{lemma}[theorem]{Lemma}
\theoremstyle{definition}
\newtheorem{construction}[theorem]{Construction}
\begin{document}

\begin{center}
\Large{\textbf{Symmetries of the Honeycomb toroidal graphs}} \\ [+4ex]
Primo\v z \v Sparl{\small$^{a, b, c}$}
\\ [+2ex]
{\it \small 
$^a$University of Ljubljana, Faculty of Education, Ljubljana, Slovenia\\
$^b$University of Primorska, Institute Andrej Maru\v si\v c, Koper, Slovenia\\
$^c$Institute of Mathematics, Physics and Mechanics, Ljubljana, Slovenia}
\end{center}

%\addtocounter{footnote}{-1}
%\footnotetext{Supported in part by ????}
%\addtocounter{footnote}{1} 

\blfootnote{
Email address: 
primoz.sparl@pef.uni-lj.si
}

%%%%%%%%%%%%%%%%%
%%%		Abstract
%%%%%%%%%%%%%%%%%

\hrule

\begin{abstract}
{\em Honeycomb toroidal graphs} are a family of cubic graphs determined by a set of three parameters, that have been studied over the last three decades both by mathematicians and computer scientists. They can all be embedded on a torus and coincide with the cubic Cayley graphs of generalized dihedral groups with respect to a set of three reflections. In a recent survey paper B. Alspach gathered most known results on this intriguing family of graphs and suggested a number of research problems regarding them. In this paper we solve two of these problems by determining the full automorphism group of each honeycomb toroidal graph.
\end{abstract}

\hrule

\begin{quotation}
\noindent {\em \small Keywords: automorphism; honeycomb toroidal graph; cubic; Cayley}
\end{quotation}

%%%%%%%%%%%%%%%%%%%%%%%%%%%%%%
%----------------Introduction---------------
%%%%%%%%%%%%%%%%%%%%%%%%%%%%%%
\section{Introduction}
\label{sec:Intro}

In this short paper we focus on a certain family of cubic graphs with many interesting properties. They are called {\em honeycomb toroidal graphs}, mainly because they can be embedded on the torus in such a way that the corresponding faces are hexagons. The usual definition of these graphs is purely combinatorial where, somewhat vaguely, the honeycomb toroidal graph $\HTG(m,n,\ell)$ is defined as the graph of order $mn$ having $m$ disjoint ``vertical'' $n$-cycles (with $n$ even) such that two consecutive $n$-cycles are linked together by $n/2$ ``horizontal'' edges, linking every other vertex of the first cycle to every other vertex of the second one, and where the last ``vertcial'' cycle is linked back to the first one according to the parameter $\ell$ (see Section~\ref{sec:HTGdef} for a precise definition). As was shown in~\cite{AlsDea09} these graphs can alternatively also be described as Cayley graphs of generalized dihedral groups with respect to a set of three reflections which for instance implies that these graphs are vertex-transitive (see Section~\ref{sec:prelim} for the definition of some terms not defined in the Introduction). 

It is thus not surprising that this family of graphs has been studied in various papers, both by mathematicians and by computer scientists. The mathematicians are of course always interested in graphs having nice structural properties and a high degree of symmetry. But one of the more important reasons why these graphs are of particular interest to them is that they stand as the last obstacle to a proof that each bipartite Cayley graph of a generalized dihedral group has the property that any two vertices in different partition sets are linked by a Hamilton path of this graph. In view of the fact that it is not even known whether all Cayley graphs of dihedral groups possess a Hamilton cycle, this would be a very remarkable result. The main point of interest for computer scientists in these graphs is that small valency and high degree of symmetry make them desirable models for (computer) networks. 

In a recent survey paper~\cite{Als??} Alspach discusses the above mentioned different viewpoints regarding these graphs, gives an indication of how they came about and why they are interesting to researchers from different fields of science and gathers most of the known results on the topic. Doing so he gives a number of interesting open problems, two of which concern symmetries of these graphs. In~[Research Problem 4]\cite{Als??} he suggests the problem of determining the full automorphism group of each $\HTG(m,n,\ell)$. Should one be able to solve this problem, the answer to~[Research Problem 5]\cite{Als??}, which asks for the classification of all examples with the smallest possible automorphism group (in this case the group acts regularly on the vertex-set of the graph), would of course also be obtained. 
\bigskip

The purpose of this paper is to solve these two problems by proving the following two theorems (see Section~\ref{sec:additional} for the definition of the graphs $\GPr(n)$).

\begin{theorem}
\label{the:main1}
Let $m$ and $n$ be positive integers, where $n \geq 4$ is even, let $0 \leq \ell \leq n/2$ be an integer of the same parity as $m$, let $\G = \HTG(m,n,\ell)$ and let the group $G$ be as in \eqref{eq:thegroupG}. Then $\G$ is not a normal Cayley graph of $G$ if and only if one of the following holds:
\begin{itemize}\itemsep = 0pt
	\item $\G = \HTG(1,6,3)$ and is isomorphic to the $3$-arc-regular complete bipartite graph $K_{3,3}$;
	\item $\G \in \{\HTG(2,4,0), \HTG(2,4,2), \HTG(1,8,3)\}$ and is isomorphic to the $2$-arc-regular cube graph;
	\item $\G = \HTG(1,14,5)$ and is isomorphic to the $4$-arc-regular Heawood graph;	
	\item $\G \in \{\HTG(1,16,5), \HTG(2,8,4)\}$ and is isomorphic to the $2$-arc-regular M\"obius-Kantor graph;
	\item $\G = \HTG(3,6,3)$ and is isomorphic to the $3$-arc-regular Pappus graph;
	\item $mn = 4n'$ for some integer $n' > 2$, either $n = 4$ or $\G \in \{\HTG(1,4n',2n'-1), \HTG(2,2n',2)\}$, and $\G$ is isomorphic to the generalized prism graph $\GPr(n')$, is not arc-transitive and has vertex-stabilizers of order $2^{n'-1}$.
\end{itemize}
\end{theorem}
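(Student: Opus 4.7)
The plan is to show that $\G = \HTG(m,n,\ell)$ is a normal Cayley graph of the generalized dihedral group $G$ except in precisely the listed cases. Setting $A = \Aut(\G)$ and writing $G_R \leq A$ for the right regular representation, normality is equivalent to $A_v \leq \Aut(G,S)$, where $S$ is the three-element connection set of reflections and $v$ is any vertex. The strategy is therefore to first understand $\Aut(G,S)$, and then to characterize when vertex-stabilizers $A_v$ are strictly larger.

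First, I would compute $\Aut(G,S)$ explicitly. Since $G$ is generalized dihedral with connection set consisting of three involutions, any element of $\Aut(G,S)$ permutes these three involutions, giving a homomorphism $\Aut(G,S)\to \S_3$ whose kernel lies in the centralizer of $S$ in $\Aut(G)$. A direct computation using the structure of $G$ yields an ``expected'' vertex stabilizer of order at most $6$ (times a small correction coming from the abelian kernel). In particular, any failure of normality must exhibit some $\varphi \in A_v$ not induced by an element of $\Aut(G,S)$, and cubic valency strongly restricts what such a $\varphi$ can look like locally.

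Second, I would split into two regimes according to whether $\G$ is arc-transitive. In the arc-transitive case, Tutte's classical theorem forces $|A_v| \in \{3,6,12,24,48\}$ and $\G$ to be $s$-arc-regular for some $1 \leq s \leq 5$. Since $\HTG(m,n,\ell)$ carries a natural hexagonal toroidal embedding (and hence girth at most $6$), the combination of girth constraints with the known list of small arc-transitive cubic graphs quickly narrows the possibilities to $K_{3,3}$, the cube, the Heawood graph, the M\"obius--Kantor graph, and the Pappus graph. Matching parameters $(m,n,\ell)$ to each of these graphs in turn then yields the first five bulleted items.

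Third, in the non-arc-transitive non-normal case, $A_v$ acts on the three neighbours of $v$ with image of order at most $2$, so $A_v$ is a $2$-group. A non-trivial element of $A_v \setminus \Aut(G,S)$ fixes one neighbour of $v$ and swaps the other two; propagating this action along edges produces a system of blocks on $V(\G)$. I would then argue that the only way such a block decomposition is compatible with the honeycomb toroidal structure is for $\G$ to be isomorphic to $\GPr(n')$ for some $n' > 2$ with $mn = 4n'$, and that the parameter triples realizing this are precisely either $n=4$ or one of the two families $(1, 4n', 2n'-1)$ and $(2, 2n', 2)$. The vertex-stabilizer size $2^{n'-1}$ and the failure of arc-transitivity then follow from the known automorphism group of $\GPr(n')$. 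The main obstacle, I expect, will be this third step: the arc-transitive analysis is essentially a finite enumeration against a known list, but the prism case requires carefully propagating the local two-neighbour swap into a globally consistent involution, together with a parameter-level verification (using the reduction lemmas relating different presentations of the same $\HTG$) that no spurious triples are introduced.
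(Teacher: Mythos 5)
Your first step (computing $\Aut(G;\{t,tx,ty\})$ and reducing normality to the question of whether the vertex-stabilizer exceeds it) matches Section~\ref{sec:color_perm} of the paper, and your third step is, in spirit, the paper's Proposition~\ref{pro:fix_one}: there the ``propagation'' is made precise by observing that an automorphism fixing a vertex, its three neighbours and one colour class must fix the blue--green cycle through that vertex pointwise, and this forces the adjacent blue--green cycle to be fixed as well unless those cycles have length $4$, i.e.\ unless $\G \cong \GPr(mn/4)$. That part of your plan is repairable along those lines.

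The genuine gap is in your second step. Arc-transitivity together with girth at most $6$ does \emph{not} narrow the possibilities to five sporadic graphs: there are infinitely many arc-transitive cubic graphs of girth $6$, and indeed Theorem~\ref{the:main2} of this very paper exhibits infinitely many $\HTG(m,n,\ell)$ that are $1$- or $2$-arc-regular and nevertheless \emph{normal} Cayley graphs of $G$ (e.g.\ $\HTG(m,2m,m)$ for $m \geq 4$). Your dichotomy conflates ``arc-transitive'' with ``non-normal'': in the arc-transitive branch you still have to separate the normal arc-transitive examples from the exceptional ones, and no appeal to a ``known list of small arc-transitive cubic graphs'' can do this, since the graphs in question have unbounded order. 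The missing idea is the paper's argument in Section~\ref{sec:additional}: a non-colour-permuting automorphism must send some $3$-path whose two end edges are red to a $3$-path whose three edges have pairwise distinct colours; the latter lies on a generic $6$-cycle, so the former lies on a $6$-cycle that is \emph{not} generic, giving at least three hexagons through each edge; combined with the fact (via Lemma~\ref{le:mix2AT}) that such an automorphism can be adjusted to fix an arc while swapping the remaining two neighbours, $\G$ is $2$-arc-transitive with every $3$-arc on a $6$-cycle. It is this much stronger hypothesis --- not mere arc-transitivity plus a girth bound --- that the classification results of \cite{ConZhoFenZha20,PotVid??,KutMar09} convert into the short list Heawood, Pappus, M\"obius--Kantor. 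Without the extra-hexagon observation your arc-transitive branch cannot be closed.
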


\begin{theorem}
\label{the:main2}
Let $m$ and $n$ be positive integers, where $n \geq 4$ is even, let $0 \leq \ell \leq n/2$ be an integer of the same parity as $m$ and let $\G = \HTG(m,n,\ell)$. If $\G$ is none of the graphs from Theorem~\ref{the:main1}, then its automorphism group can be determined via the four conditions
\begin{itemize}
\item[(c1)] $\gcd(n,\ell+m) = 2m$ and $2mn \mid (\ell^2+2m\ell-3m^2)$,
\item[(c2)] $\gcd(n,\ell-m) = 2m$ and $2mn \mid (\ell^2-2m\ell-3m^2)$,
\item[(c3)] $\ell \in \{0,n/2\}$,
\item[(c4)] $\gcd(n,\ell+m) = 2m = \gcd(n,\ell-m)$ and $2mn \mid (\ell^2+3m^2)$,
\end{itemize}
where
\begin{itemize}
\itemsep = 0pt
\item $\G$ is $2$-arc-regular if and only if any two (and thus all) of (c1), (c2), (c3) and (c4) hold, which occurs if and only if $\G$ is one of $\HTG(m,2m,m)$ with $m \geq 4$, and $\HTG(m,6m,3m)$ with $m \geq 2$;
\item $\G$ is $1$-arc-regular if and only if (c4) holds, but none of (c1), (c2) and (c3) holds;
\item $\G$ is not arc-transitive with vertex-stabilizers of order $2$ if and only if precisely one of (c1), (c2) and (c3) holds;
\item $\Aut(\G)$ is regular on $\G$ if and only if none of (c1), (c2), (c3) and (c4) holds.
\end{itemize}
\end{theorem}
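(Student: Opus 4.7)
Since Theorem~\ref{the:main1} guarantees that under our hypotheses $\Gamma = \HTG(m,n,\ell)$ is a \emph{normal} Cayley graph of $G$, the standard characterisation of the automorphism group of a normal Cayley graph yields
\[
\Aut(\Gamma) \;=\; G \rtimes \Aut(G,S),
\]
where $S$ is the connection set consisting of three specific involutions (the three reflections) and $\Aut(G,S)$ denotes the stabiliser of $S$ in $\Aut(G)$. Thus the plan is to determine $\Aut(G,S)$ directly; once we know this subgroup, the stabiliser of a vertex in $\Aut(\Gamma)$ is isomorphic to it, and the arc-action of $\Aut(\Gamma)$ is completely controlled by its natural action on the three elements of $S$.

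The key observation is that, because $S$ generates $G$, an element of $\Aut(G,S)$ is uniquely determined by the permutation it induces on $S$. This gives a faithful embedding $\Aut(G,S)\hookrightarrow \S(S)\cong S_3$. I would then enumerate the non-identity elements of $S_3$: the three transpositions (fixing one of the three reflections and swapping the other two) and the two $3$-cycles (which are inverses of one another and hence represent a single ``condition''). For each such candidate permutation $\pi$, write down the map $\phi_\pi \colon S\to S$, extend it formally via the presentation of $G$, and derive the precise arithmetic condition on $m,n,\ell$ under which $\phi_\pi$ is well defined and is an automorphism of $G$. Matching the three transpositions with (c1),~(c2),~(c3) and the $3$-cycle with (c4) will be the heart of the argument, and computing these conditions from the relations in the generalized dihedral group $G$ of~\eqref{eq:thegroupG} is the main technical obstacle: one has to track $\gcd$- and divisibility-relations of the form $\gcd(n,\ell\pm m)=2m$ and $2mn\mid(\ell^{2}\pm 2m\ell - 3m^{2})$ that encode precisely when the proposed images of the reflections have the required orders and satisfy the defining relations.

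Once the correspondence between the four numerical conditions and the four non-trivial elements of $S_3/\{\pm 1\}$ is established, the case analysis in the theorem follows from elementary group theory on $S_3$. The subgroups of $S_3$ are: the trivial group, the three subgroups of order~$2$ generated by a single transposition, the unique subgroup of order~$3$ generated by a $3$-cycle, and $S_3$ itself; and any two distinct non-identity elements of $S_3$ generate the whole group. Consequently:
\begin{itemize}\itemsep = 0pt
\item if two of (c1)--(c4) hold, then $\Aut(G,S)=S_3$, all four hold, and $\Gamma$ is $2$-arc-regular;
\item if only (c4) holds, then $\Aut(G,S)\cong \ZZ_3$ and $\Gamma$ is $1$-arc-regular;
\item if exactly one of (c1), (c2), (c3) holds, then $\Aut(G,S)\cong\ZZ_2$, so the vertex-stabiliser has order~$2$ and $\Gamma$ is not arc-transitive;
\item if none of (c1)--(c4) holds, then $\Aut(G,S)$ is trivial and $\Aut(\Gamma)=G$ acts regularly on $V(\Gamma)$.
\end{itemize}

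It remains only to identify, among the parameter-triples satisfying two or more of (c1)--(c4), which $\Gamma$ actually arise; a direct inspection of the simultaneous system \textbf{((c1)} and \textbf{(c4))} together with the parity/gcd constraints on $(m,n,\ell)$ forces $n=2m$ with $\ell=m$, or $n=6m$ with $\ell=3m$, producing exactly the two infinite families $\HTG(m,2m,m)$ with $m\geq 4$ and $\HTG(m,6m,3m)$ with $m\geq 2$ listed in the statement. The small cases excluded by ``$m\geq 4$'' and ``$m\geq 2$'' are precisely those already accounted for in Theorem~\ref{the:main1}, which is what guarantees consistency between the two theorems.
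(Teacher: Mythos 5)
Your proposal is correct and follows essentially the same route as the paper: reduce to computing $\Aut(G;\{t,tx,ty\})\leq S_3$ via normality, match the three transpositions of the connection set with (c1)--(c3) and the $3$-cycles with (c4) by checking that the induced map preserves the orders of $x$, $y$, $x^{-1}y$ and the relation $y^m=x^{(\ell+m)/2}$, and then read off the four cases from the subgroup lattice of $S_3$. The paper carries out exactly this plan in Lemmas~\ref{le:fixred}--\ref{le:3-cycle_colors} and Corollary~\ref{cor:2AT}, so the only thing your sketch leaves implicit is the routine arithmetic you correctly identify as the technical core.
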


%%%%%%%%%%%%%%%%%%%%%%%%%%%%%%
%----------------Preliminaries---------------
%%%%%%%%%%%%%%%%%%%%%%%%%%%%%%
\section{Preliminaries}
\label{sec:prelim}

Throughout the paper all graphs are assumed to be finite, connected and undirected. Adjacency is denoted by $\sim$ and the edges are usually given as unordered pairs of vertices. 

For an integer $n$ the ring of residue classes modulo $n$ is denoted by $\ZZ_n$. Therefore, all computations involving elements from $\ZZ_n$ are performed modulo $n$.

For an abelian group $A$ the {\em generalized dihedral group} corresponding to $A$ is the group of order $2|A|$ generated by $A$ and an involution $t$ not in $A$ such that $tat = a^{-1}$ for all $a \in A$.

For a group $G$ and an inverse-closed subset $S \subset G \setminus \{1\}$ the {\em Cayley graph} $\Cay(G;S)$ of $G$ with respect to $S$ has vertex-set $G$ and edge-set $\{\{g,gs\} \colon g \in G, s \in S\}$. The graph $\Cay(G;S)$ is a {\em normal} Cayley graph of $G$ if the left regular representation $G_L$ of $G$ is a normal subgroup of the automorphism group $\Aut(\Cay(G;S))$.

A graph $\G$ is {\em vertex-transitive} if the automorphism group $\Aut(\G)$ of $\G$ acts transitively on the vertex-set of $\G$. For $s \geq 1$ an $s$-arc of $\G$ is a sequence of $s+1$ vertices such that any consecutive two are adjacent and any consecutive three are pairwise distinct. A graph $\G$ is {\em $s$-arc-transitive} if $\Aut(\G)$ acts transitively on the set of all $s$-arcs of $\G$. If this action is regular, $\G$ is said to be {\em $s$-arc-regular}. The term $1$-arc-transitive is abbreviated to {\em arc-transitive}.

%%%%%%%%%%%%%%%%%%%%%%%%%%%%%%
%----------------The HTG graphs---------------
%%%%%%%%%%%%%%%%%%%%%%%%%%%%%%
\section{The HTG graphs}
\label{sec:HTGdef}

We now review the definition of the honeycomb toroidal graphs and their presentation as Cayley graphs of an appropriate generalized dihedral group. We also fix some terminology pertaining to the two viewpoints that we will be using throughout the rest of the paper. The following is simply a restatement of the definition given in~\cite{AlsDea09}.

\begin{construction}
\label{cons}
Let $m$ and $n$ be positive integers, where $n \geq 4$ is even. For each integer $\ell$ with $0 \leq \ell \leq n-1$, where $\ell$ is of the same parity as $m$, the {\em honeycomb toroidal graph} $\HTG(m,n,\ell)$ is the cubic graph with vertex-set $\{\la i,j\ra \colon i \in \ZZ_m, j \in \ZZ_n\}$ and the following adjacencies:
\begin{itemize}
\itemsep = -2pt
	\item $\la i,j \ra \sim \la i, j \pm 1\ra$ for all $i \in \ZZ_m, j \in \ZZ_n$;
	\item $\la i,j \ra \sim \la i+1, j\ra$ for all $i \in \ZZ_m \setminus \{m-1\}, j \in \ZZ_n$ with $i$ and $j$ of different parity;
	\item $\la m-1,j \ra \sim \la 0, j+\ell \ra$ for all $j \in \ZZ_m$ of the same parity as $m$.	
\end{itemize}
\end{construction}

The reader will notice that $\HTG(m,n,\ell) \cong \HTG(m,n,n-\ell)$, and so we loose nothing by assuming $\ell \leq n/2$, which is what we will usually do (in this case the $\HTG$ graph is said to be in {\em normal form}~\cite{Als??}). 
\medskip

In~\cite{AlsDea09} it was shown that each $\HTG(m,n,\ell)$ is isomorphic to a Cayley graph of a generalized dihedral group. In particular, the following result was proved.

\begin{proposition}\cite[Theorem~3.4]{AlsDea09}
Let $m$ and $n$ be positive integers, where $n \geq 4$ is even, and let $0 \leq \ell \leq n-1$ be an integer of the same parity as $m$. Let 
\begin{equation}
\label{eq:thegroupG}
	G = \la t,x,y \mid t^2, x^{n/2}, y^m = x^{(\ell + m)/2}, txt = x^{-1}, tyt = y^{-1}, xy = yx\ra.
\end{equation}
Then the honeycomb toroidal graph $\HTG(m,n,\ell)$ is isomorphic to the Cayley graph $\Cay(G;\{t,tx,ty\})$. 
\end{proposition}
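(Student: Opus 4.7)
The plan is to construct an explicit bijection $\phi \colon G \to V(\HTG(m,n,\ell))$ and then verify that $\phi$ sends the three Cayley edges out of each vertex to the three $\HTG$ edges; since both graphs are cubic and have the same vertex count, this will establish the isomorphism.

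First I would work out the structure of $G$. Because $xy = yx$, the subgroup $A := \la x, y\ra$ is abelian; because $t$ is an involution outside $A$ inverting both $x$ and $y$, $G = A \sqcup At$. Using $x^{n/2} = 1$ and $y^m = x^{(\ell+m)/2}$, a short argument shows that every element of $A$ has a unique normal form $y^a x^b$ with $0 \le a < m$ and $0 \le b < n/2$, so $|G| = mn = |V(\HTG(m,n,\ell))|$.

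Next I define $\phi$ on normal forms by
$$\phi(y^a x^b) = \la a, 2b + a\ra, \qquad \phi(y^a x^b t) = \la a, 2b + a + 1\ra.$$
The ``$+a$'' shift in the second coordinate is chosen precisely so that the parity of the row index $j$ is congruent to that of $a$ on $A$ and to that of $a+1$ on $At$; this is what makes the parity conditions in rules (ii) and (iii) of Construction~\ref{cons} fall out automatically. To verify that $\phi$ is a graph homomorphism, I use $tx = x^{-1}t$ and $ty = y^{-1}t$ to rewrite $g \cdot t$, $g \cdot tx$, $g \cdot ty$ in normal form, for $g$ ranging over each coset $A$ and $At$, and check that in each case $\phi$ of the product is an $\HTG$-neighbour of $\phi(g)$. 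Multiplication by $t$ or $tx$ stays within the column $a$ and gives the vertical neighbours $\la a, j\pm 1\ra$ of rule~(i). For $1 \le a \le m-1$, multiplication by $ty$ shifts the column index by $\pm 1$, and the ``$+a$'' choice ensures that the row parity is the one forced by rule~(ii).

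The main obstacle is the wrap-around: when $a = m-1$ the product $y^{m-1} x^b t \cdot ty = y^m x^b$ falls outside the normal-form range and must be rewritten using $y^m = x^{(\ell+m)/2}$; analogously, $x^b \cdot ty$ requires rewriting $y^{-1} = y^{m-1} x^{-(\ell+m)/2}$. Either rewriting contributes a shift of $(\ell+m)/2$ in the $x$-exponent, which $\phi$ doubles to a shift of $\ell + m \equiv \ell \pmod{n}$ on the row coordinate of the target column. A brief parity check, using that $\ell$ and $m$ have the same parity, then shows that these are exactly the edges described by rule~(iii). Once this final case is settled, $\phi$ is a bijection sending each Cayley edge to an $\HTG$ edge, hence an isomorphism.
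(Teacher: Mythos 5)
Your proposal is correct and follows essentially the same route as the source: the paper does not reprove this result (it cites \cite[Theorem~3.4]{AlsDea09}) but explicitly records the isomorphism used there, namely the correspondence $x^jy^i \leftrightarrow \la i,(i+2j)_n\ra$ and $x^jy^it \leftrightarrow \la i,(i+2j+1)_n\ra$, which is exactly your map $\phi$ with $a=i$, $b=j$. Your normal-form count $|G|=mn$ and the wrap-around computation via $y^m=x^{(\ell+m)/2}$ are the right supporting details, so nothing further is needed.
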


Observe that the group $G$ is a generalized dihedral group, where $\la x,y\ra$ is the index $2$ abelian subgroup (of order $mn/2$). It is easy to verify that the orders of $x$, $y$ and $x^{-1}y$ are
\begin{equation}
\label{eq:orders}
|x| = n/2,\quad |y| =  \frac{mn}{\gcd(n,\ell + m)} \quad \text{and}\quad |x^{-1}y| = \frac{mn}{\gcd(n,\ell-m)}.
\end{equation}
The following detail from the proof of \cite[Theorem~3.4]{AlsDea09} will be important for us. One of the isomorphisms from $\HTG(m,n,\ell)$ to $\Cay(G;\{t,tx,ty\})$ is given by the following correspondence between the vertices of these two graphs. For each $i \in \{0,1,\ldots , m-1\}$ the vertex $\la i,i_n\ra$, where $i_n \in \{0,1,\ldots , n-1\}$ is such that $i \equiv i_n \pmod{n}$, corresponds to $y^i$. For each $i \in \{0,1,\ldots , m-1\}$ and each $j \in \{0,1,\ldots , n/2-1\}$ the vertex $\la i, (i+2j)_n\ra$ then corresponds to $x^jy^i$, while the vertex $\la i, (i+2j+1)_n\ra$ corresponds to $x^jy^it = tx^{-j}y^{-i}$. 

Throughout the paper we will constantly be switching between these two viewpoints of the $\HTG$ graphs. More precisely, whenever we have a $\HTG$ graph $\G$, we think of its vertices as being the pairs $\la i , j\ra$ from Construction~\ref{cons} and at the same time as being the elements of the above group $G$ via the described correspondence. With this in mind we let 
\begin{equation}
\label{eq:edgecolors}
\Re = \{\{g, gt\} \colon g \in G\}, \quad \Bl = \{\{g,gtx\} \colon g \in G\}, \quad \text{and}\quad \Gr = \{\{g,gty\} \colon g \in G\},
\end{equation}
and call the members of $\Re$, $\Bl$ and $\Gr$ the {\em red}, {\em blue} and {\em green} edges, respectively (see Figure~\ref{fig:first} for two examples of how the edges are colored). In this respect we are thus viewing $\HTG(m,n,\ell)$ as what is known in the literature as a Cayley colored graph. Note that each vertex of $\G$ is incident to one edge of each of the three colors. Moreover, for any pair of colors the subgraph consisting of all the edges of these two colors is a disjoint union of cycles (all of which have the same even length, which equals one of the numbers from~\ref{eq:orders}, depending on the two chosen colors) which in the whole graph are linked together by the edges of the third color in a cyclic fashion. 
\begin{figure}[!h]
\begin{center}
	\includegraphics[scale=1]{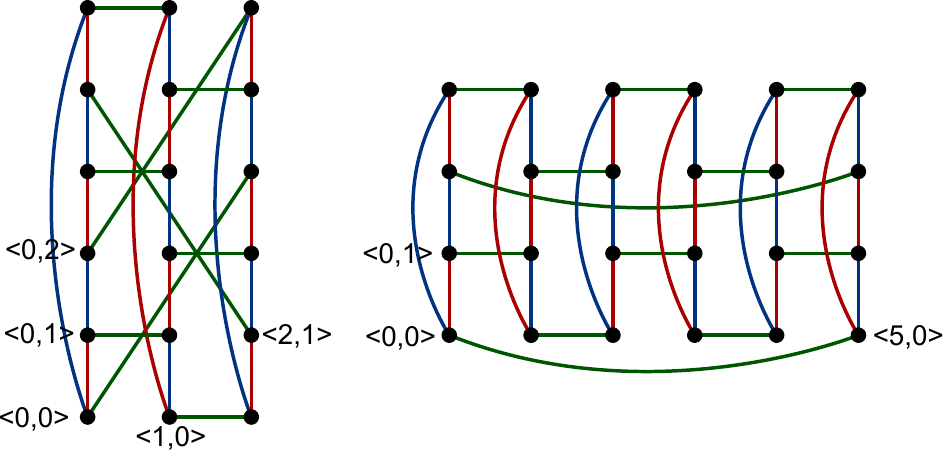}
	\caption{The honeycomb toroidal graphs $\HTG(3,6,3)$ and $\HTG(6,4,0)$.}
	\label{fig:first}
\end{center}
\end{figure}

Let us also point out that, since the index $2$ abelian subgroup $\la x,y \ra$ of $G$ of course acts semiregularly with two orbits on the vertex-set of $\G$, the $\HTG$ graphs are also so-called bi-Cayley graphs on abelian groups. Moreover, as the two orbits of $\la x, y\ra$ are independent sets, they are also so-called Haar graphs of abelian groups. One could thus make use of some results from~\cite{ConZhoFenZha20, EstPis16, FenKovWanYan20, ZhoFen14, ZhoFen16}. Nevertheless, as we will see, in this special setting a direct approach works just fine.

%%%%%%%%%%%%%%%%%%%%%%%%%%%%%%
%---------------- The color permuting automorphisms ---------------
%%%%%%%%%%%%%%%%%%%%%%%%%%%%%%
\section{The color permuting automorphisms}
\label{sec:color_perm}

In this section we investigate the automorphisms of the $\HTG$ graphs that permute the colors of the edges as given in~\eqref{eq:edgecolors}. More precisely, let $\G = \HTG(m,n,\ell)$ and let $\Re, \Bl$ and $\Gr$ be as in \eqref{eq:edgecolors}. We investigate the automorphisms of $\G$ which preserve the set $\{\Re, \Bl, \Gr\}$, that is, the automorphisms of $\G$ for which whenever two edges are of the same color, the images of these two edges are also of the same color. Such automorphisms are said to be {\em color permuting} in~\cite{FioFioYeb92, HujKutMorMor16} where this concept was studied in some detail. 

Let $G$ be as in \eqref{eq:thegroupG} and let $G_L$ be the left regular representation of $G$. Of course, each element of $G_L$ is color permuting (in fact, $\Re$, $\Bl$ and $\Gr$ are the three orbits of $G_L$ on the edge-set of $\G$). Denote the set of all automorphisms of $G$ which fix the set $\{t,tx,ty\}$ setwise by $\Aut(G;\{t,tx,ty\})$. Then each element of $\Aut(G;\{t,tx,ty\})$ is a color permuting automorphism of $\G$ which fixes the vertex $1$. As in our case each element of the connection set $\{t,tx,ty\}$ is an involution, it is easy to verify that the converse also holds (but see also~\cite[Lemma~2.1]{FioFioYeb92} or~\cite[Section~5]{HujKutMorMor16}). The set of all color permuting automorphisms of $\G$ that fix the vertex $1$ thus coincides with $\Aut(G;\{t,tx,ty\})$. It now easily follows that the group $\Aut_c(\G)$ of all color permuting automorphisms of $\G$ is the semidirect product $G_L \rtimes \Aut(G;\{t,tx,ty\})$. In the words of~\cite{HujKutMorMor16}, all elements of $\Aut_c(\G)$ are of affine type. It is well known and easy to see (but see for instance the seminal paper by Xu~\cite{Xu98} in which the notion of a normal Cayley graph was introduced) that for a Cayley graph $\G' = \Cay(H;S)$ the group $H_L$ is normal in $\Aut(\G')$ if and only if $\Aut(\G') = H_L \rtimes \Aut(H;S)$. Therefore, in our setting the graph $\G = \HTG(m,n,\ell)$ is a normal Cayley graph of $G$ from~\eqref{eq:thegroupG} if and only if $\Aut(\G) = \Aut_c(\G)$. 
\medskip

In the rest of this section we determine the group $\Aut(G;\{t,tx,ty\})$ (and consequently also $\Aut_c(\G)$). Of course, as the connection set consists of three involutions we have that $\Aut(G;\{t,tx,ty\})$ is isomorphic to a subgroup of the symmetric group $S_3$. We first consider each of the three possibilities of fixing one of the elements of $\{t,tx,ty\}$ and interchanging the other two, and then finally also consider the possibility of permuting all three generators $t$, $tx$ and $ty$ in a cycle of length $3$. Observe that a permutation $\varphi$ of the set $\{t,tx,ty\}$ extends to an automorphism of $G$ if and only if 
\begin{equation}
\label{eq:cond1}
|\varphi(t)\varphi(tx)| = n/2,\quad |\varphi(t)\varphi(ty)| = \frac{mn}{\gcd(n,\ell + m)},\quad |\varphi(tx)\varphi(ty)| = \frac{mn}{\gcd(n,\ell-m)}
\end{equation}
and
\begin{equation}
\label{eq:cond2}
(\varphi(t)\varphi(ty))^m = (\varphi(t)\varphi(tx))^{(\ell+m)/2}.
\end{equation}

\begin{lemma}
\label{le:fixred}
Let the group $G$ be as in \eqref{eq:thegroupG}. Then there exists an automorphism in $\Aut(G;\{t,tx,ty\})$ fixing $t$ and interchanging $tx$ with $ty$ if and only if 
$$
	\gcd(n,\ell+m) = 2m\quad \text{and}\quad 2mn \mid (\ell^2+2m\ell-3m^2).
$$ 
\end{lemma}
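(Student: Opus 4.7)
The plan is to apply the general criterion \eqref{eq:cond1}--\eqref{eq:cond2} to the unique candidate permutation $\varphi$ of $\{t,tx,ty\}$ fixing $t$ and swapping $tx$ with $ty$, and to show that the two resulting constraints translate precisely into the two arithmetic conditions of the lemma.

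First I would check \eqref{eq:cond1}. A direct computation, using $t^2 = 1$ and $tyt = y^{-1}$, gives $\varphi(t)\varphi(tx) = t\cdot ty = y$, $\varphi(t)\varphi(ty) = t\cdot tx = x$, and $\varphi(tx)\varphi(ty) = (ty)(tx) = y^{-1}x$. Since $|y^{-1}x| = |x^{-1}y| = mn/\gcd(n,\ell-m)$ by \eqref{eq:orders}, the third order condition is automatic, while the first two both reduce to $|y| = n/2$, i.e.\ to $\gcd(n,\ell+m) = 2m$.

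Assuming this, I would next rewrite \eqref{eq:cond2} as the single identity
\begin{equation*}
x^m = y^{(\ell+m)/2}
\end{equation*}
inside the abelian subgroup $\la x,y\ra \leq G$ of order $mn/2$. To analyze it, set $k = (\ell+m)/2$; the hypothesis $\gcd(n,\ell+m)=2m$ forces $2m \mid \ell+m$ and hence $m \mid k$, so put $q = k/m$ and $z = yx^{-q}$. The relation $y^m = x^k$ then gives $z^m = 1$, and a count of orders ($|x|=|y|=n/2$, $|\la x,y\ra|=mn/2$, and $|\la x,z\ra| \leq |x|\cdot|z|/|\la x\ra \cap \la z\ra|$) forces $|z|=m$ and $\la x\ra \cap \la z\ra = 1$, so $\la x,y\ra = \la x\ra \times \la z\ra \cong \ZZ_{n/2}\times \ZZ_m$.

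In this decomposition $y^k = z^k x^{qk} = x^{mq^2}$, since $z^k = z^{mq} = 1$, so the identity $x^m = y^k$ is equivalent to $x^{m(1-q^2)} = 1$, and since $m \mid n/2$ this is equivalent to $n/(2m) \mid 1 - q^2$. A short expansion with $q = (\ell+m)/(2m)$ gives $1 - q^2 = -(\ell^2 + 2m\ell - 3m^2)/(4m^2)$, so the divisibility $n/(2m) \mid 1-q^2$ is equivalent to $2mn \mid \ell^2 + 2m\ell - 3m^2$, exactly as required. Both directions of the lemma follow at once because \eqref{eq:cond1}--\eqref{eq:cond2} are necessary and sufficient for $\varphi$ to extend to an element of $\Aut(G;\{t,tx,ty\})$. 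The only step that is not purely formal is the direct-product decomposition $\la x,y\ra = \la x\ra \times \la z\ra$; once this is in place the rest is a routine arithmetic simplification.
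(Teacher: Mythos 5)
Your proof is correct and follows essentially the same route as the paper: reduce \eqref{eq:cond1} to $\gcd(n,\ell+m)=2m$ and then rewrite \eqref{eq:cond2} as $x^m=y^{(\ell+m)/2}=x^{q\,(\ell+m)/2}$ using $y^m=x^{(\ell+m)/2}$, comparing exponents modulo $|x|=n/2$. The only difference is your explicit direct-product decomposition $\la x,y\ra=\la x\ra\times\la z\ra$, which is a correct but superfluous extra step, since $x^a=x^b \Leftrightarrow n/2\mid a-b$ already follows from $|x|=n/2$ in \eqref{eq:orders}.
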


\begin{proof}
In this case condition~\eqref{eq:cond1} is equivalent to $n/2 = mn/\gcd(n,\ell+m)$, that is $\gcd(n,\ell+m) = 2m$, while \eqref{eq:cond2} reads $x^m = y^{(\ell+m)/2}$. Assuming the first of these holds we have that $2m$ divides $n$ and $\ell+m = 2qm$ for some $q$ coprime to $n/(2m)$. Then \eqref{eq:thegroupG} implies that \eqref{eq:cond2} is in fact 
$$
	x^m = y^{(\ell+m)/2} = y^{qm} = (y^m)^q = x^{q(\ell+m)/2}, 
$$
and so \eqref{eq:cond2} reads $x^{m} = x^{(\ell+m)^2/(4m)}$, which holds if and only if $n/2$ divides $(\ell^2+2m\ell - 3m^2)/4m$.
\end{proof}

\begin{lemma}
\label{le:fixblue}
Let the group $G$ be as in \eqref{eq:thegroupG}. Then there exists an automorphism in $\Aut(G;\{t,tx,ty\})$ fixing $tx$ and interchanging $t$ with $ty$ if and only if 
$$
	\gcd(n,\ell-m) = 2m\quad \text{and}\quad 2mn \mid (\ell^2-2m\ell-3m^2).
$$
\end{lemma}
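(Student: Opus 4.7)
The plan is to mirror the argument of Lemma~\ref{le:fixred} essentially verbatim, with $\ell+m$ replaced by $\ell-m$ and a change of sign in the appropriate place. Concretely, let $\varphi$ be the candidate permutation of $\{t,tx,ty\}$ with $\varphi(tx)=tx$, $\varphi(t)=ty$ and $\varphi(ty)=t$. Since $t^2=1$ and $xy=yx$, the relevant products in $G$ are
$\varphi(t)\varphi(tx) = ty\cdot tx = y^{-1}x$,
$\varphi(t)\varphi(ty) = ty\cdot t = y^{-1}$, and
$\varphi(tx)\varphi(ty) = tx\cdot t = x^{-1}$. Using \eqref{eq:orders} and the fact that $y^{-1}x$ has the same order as its inverse $x^{-1}y$, the three order conditions from \eqref{eq:cond1} collapse to the single requirement $n/2 = mn/\gcd(n,\ell-m)$, i.e.\ $\gcd(n,\ell-m) = 2m$.

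Next I would translate \eqref{eq:cond2} into the single equation $y^{-m} = (y^{-1}x)^{(\ell+m)/2}$ in the abelian subgroup $\la x,y\ra$. Assuming the first condition $\gcd(n,\ell-m)=2m$, write $\ell - m = 2qm$ with $q$ coprime to $n/(2m)$, which gives $(\ell+m)/2 = (q+1)m$. Then, using $y^m = x^{(\ell+m)/2}$ from \eqref{eq:thegroupG}, I expand
\[
(y^{-1}x)^{(\ell+m)/2} = y^{-(q+1)m} x^{(\ell+m)/2} = x^{-(q+1)(\ell+m)/2} \cdot x^{(\ell+m)/2} = x^{-q(\ell+m)/2},
\]
while $y^{-m} = x^{-(\ell+m)/2}$. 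Hence \eqref{eq:cond2} reduces to $x^{(q-1)(\ell+m)/2} = 1$, which (since $|x|=n/2$) is equivalent to $n/2 \mid (q-1)(\ell+m)/2$. Substituting $q = (\ell-m)/(2m)$ yields $(q-1)(\ell+m)/2 = (\ell^2-2m\ell-3m^2)/(4m)$, so the divisibility becomes $2mn \mid (\ell^2 - 2m\ell - 3m^2)$, as claimed.

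Conversely, if both stated conditions hold, then the above calculation shows that \eqref{eq:cond1} and \eqref{eq:cond2} are both satisfied by $\varphi$, so by the remark preceding Lemma~\ref{le:fixred} the permutation $\varphi$ extends to an automorphism of $G$, which is manifestly in $\Aut(G;\{t,tx,ty\})$. There is no real obstacle here; the only point requiring mild care is the substitution step where one rewrites $y^{-(q+1)m}$ via the defining relation $y^m = x^{(\ell+m)/2}$ in order to eliminate $y$ in favor of $x$ and obtain a condition on the order of a power of $x$, which is exactly the same trick used in Lemma~\ref{le:fixred}.
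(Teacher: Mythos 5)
Your proof is correct and follows essentially the same route as the paper: reduce \eqref{eq:cond1} to $\gcd(n,\ell-m)=2m$ by computing the three products, then write $\ell-m=2qm$ and use the relation $y^m=x^{(\ell+m)/2}$ to turn \eqref{eq:cond2} into a condition on a power of $x$, arriving at $2mn\mid(\ell^2-2m\ell-3m^2)$. The only (cosmetic) difference is that the paper first rewrites \eqref{eq:cond2} as $x^{(\ell+m)/2}=y^{(\ell-m)/2}$ before eliminating $y$, whereas you eliminate $y$ directly; both computations agree.
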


\begin{proof}
In this case \eqref{eq:cond1} is equivalent to $\gcd(n,\ell-m) = 2m$, while \eqref{eq:cond2} reads $y^{-m} = (y^{-1}x)^{(\ell+m)/2}$, which is equivalent to $x^{(\ell+m)/2} = y^{(\ell-m)/2}$. If $\gcd(n,\ell-m) = 2m$, then $2m \mid n$ and $\ell - m = 2qm$ for some $q$ coprime to $n/(2m)$. Then $(\ell+m)/2 = (q+1)m$, and so \eqref{eq:thegroupG} implies that \eqref{eq:cond2} is equivalent to
$$
x^{(q+1)m} = y^{qm} = x^{q(\ell+m)/2} = x^{q(q+1)m},
$$
which holds if and only if $1 = x^{(q+1)(q-1)m} = x^{(\ell^2-2m\ell-3m^2)/(4m)}$. The result now follows.
\end{proof}

\begin{lemma}
\label{le:fixgreen}
Let the group $G$ be as in \eqref{eq:thegroupG}. Then there exists an automorphism in $\Aut(G;\{t,tx,ty\})$ fixing $ty$ and interchanging $t$ with $tx$ if and only if $\ell \in \{0,n/2\}$.
\end{lemma}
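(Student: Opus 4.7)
The plan is to mimic the proofs of Lemmas~\ref{le:fixred} and~\ref{le:fixblue}: apply the extension criteria \eqref{eq:cond1} and \eqref{eq:cond2} to the specific involution-swap $\varphi$ determined by $\varphi(t)=tx$, $\varphi(tx)=t$, $\varphi(ty)=ty$. Using $txt=x^{-1}$ and $t^2=1$, the three products appearing in \eqref{eq:cond1} compute to
$$
\varphi(t)\varphi(tx) = (tx)t = x^{-1},\quad \varphi(t)\varphi(ty) = (tx)(ty) = x^{-1}y,\quad \varphi(tx)\varphi(ty) = t(ty) = y.
$$
The first product has order $n/2$ automatically. Combined with the order formulas from \eqref{eq:orders}, the remaining two order requirements both read $\gcd(n,\ell+m) = \gcd(n,\ell-m)$ (the roles of $\ell\pm m$ are swapped relative to \eqref{eq:orders}, which is the whole content of \eqref{eq:cond1} here).

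For \eqref{eq:cond2}, I would substitute the above into $(\varphi(t)\varphi(ty))^m = (\varphi(t)\varphi(tx))^{(\ell+m)/2}$. Using $xy=yx$ and the defining relation $y^m = x^{(\ell+m)/2}$ from \eqref{eq:thegroupG}, the left hand side collapses to
$$
(x^{-1}y)^m = x^{-m}y^m = x^{-m}\cdot x^{(\ell+m)/2} = x^{(\ell-m)/2},
$$
while the right hand side is simply $(x^{-1})^{(\ell+m)/2} = x^{-(\ell+m)/2}$. Equating these gives $x^\ell=1$, and since $|x|=n/2$ and $0\le \ell \le n/2$, this is equivalent to $\ell\in\{0,n/2\}$.

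To finish the converse, I would verify that the condition $\ell\in\{0,n/2\}$ not only satisfies \eqref{eq:cond2} (trivially) but also automatically satisfies the gcd equality forced by \eqref{eq:cond1}. When $\ell=0$ the two gcds coincide trivially; when $\ell=n/2$ one has $\ell+m\equiv -(\ell-m)\pmod n$, so again $\gcd(n,\ell+m)=\gcd(n,\ell-m)$. Consequently no extra divisibility constraint survives, and this is exactly why the conclusion here is cleaner than in Lemmas~\ref{le:fixred} and~\ref{le:fixblue}. The only place where a bit of care is needed is lining up which of $\gcd(n,\ell\pm m)$ controls the order of which product; apart from this bookkeeping the argument is a direct computation and I do not anticipate a substantive obstacle.
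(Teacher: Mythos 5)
Your proposal is correct and follows essentially the same route as the paper: both reduce \eqref{eq:cond1} to $\gcd(n,\ell+m)=\gcd(n,\ell-m)$, reduce \eqref{eq:cond2} to $x^{\ell}=1$ via the relation $y^m=x^{(\ell+m)/2}$, and then observe that $\ell\in\{0,n/2\}$ automatically forces the gcd equality since $(n/2+m)+(n/2-m)=n$. The only cosmetic difference is that you spell out the swap of the roles of $\ell\pm m$ in the order conditions, which the paper leaves implicit.
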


\begin{proof}
In this case \eqref{eq:cond1} is equivalent to $\gcd(n,\ell+m) = \gcd(n,\ell-m)$, while \eqref{eq:cond2} reads $(x^{-1}y)^m = x^{-(\ell+m)/2}$. By \eqref{eq:thegroupG} the later is equivalent to $x^{(\ell+m)/2} = y^m = x^{(m-\ell)/2}$, that is $x^\ell = 1$. Since this holds if and only if $\ell \in \{0,n/2\}$, we only need to verify that $\gcd(n,n/2+m) = \gcd(n,n/2-m)$ holds. As $(n/2+m) + (n/2-m) = n$, this is clear.
\end{proof}

\begin{lemma}
\label{le:3-cycle_colors}
Let the group $G$ be as in \eqref{eq:thegroupG}. Then there exists an automorphism in $\Aut(G;\{t,tx,ty\})$ of order $3$ if and only if 
$$
	\gcd(n,\ell+m) = \gcd(n,\ell-m) = 2m\quad \text{and}\quad 2mn \mid (\ell^2 + 3m^2).
$$
\end{lemma}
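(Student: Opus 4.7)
The plan is to follow the same template as Lemmas \ref{le:fixred}--\ref{le:fixgreen}: pick a candidate permutation $\varphi$ on $\{t,tx,ty\}$, use the established criterion that $\varphi$ extends to an automorphism of $G$ precisely when \eqref{eq:cond1} and \eqref{eq:cond2} hold, and translate those two conditions into arithmetic statements about $m$, $n$ and $\ell$.

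First I would observe that, since $\Aut(G;\{t,tx,ty\})$ embeds into $S_3$, any element of order $3$ acts as a $3$-cycle on $\{t,tx,ty\}$; there is essentially only one such cycle to consider (its inverse gives the same automorphism group element), so it suffices to take $\varphi:t\mapsto tx\mapsto ty\mapsto t$. Using the defining relations $txt=x^{-1}$ and $tyt=y^{-1}$ from \eqref{eq:thegroupG}, I would compute
$$\varphi(t)\varphi(tx)=x^{-1}y,\qquad \varphi(t)\varphi(ty)=x^{-1},\qquad \varphi(tx)\varphi(ty)=y^{-1},$$
whose orders are read off from \eqref{eq:orders}. Matching them with the right-hand sides of \eqref{eq:cond1} collapses to the single requirement $\gcd(n,\ell+m)=\gcd(n,\ell-m)=2m$, giving the first half of the claimed condition.

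With that in force, I would parametrize $\ell+m=2qm$ with $\gcd(q,n/(2m))=1$, so that $(\ell+m)/2=qm$. Using $y^m=x^{(\ell+m)/2}$ to eliminate powers of $y$, condition \eqref{eq:cond2} rewrites as
$$x^{-m}=(x^{-1}y)^{(\ell+m)/2}=x^{-qm}y^{qm}=x^{(q^2-q)m},$$
that is, $x^{(q^2-q+1)m}=1$. Since $|x|=n/2$, this is equivalent to $n\mid 2m(q^2-q+1)$. A short algebraic simplification of $2m(q^2-q+1)$ in terms of $\ell$ and $m$ yields $(\ell^2+3m^2)/(2m)$, so the divisibility becomes $2mn\mid (\ell^2+3m^2)$, as required.

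I do not expect a serious obstacle here; the only thing to be careful about is the verification that the other $3$-cycle $\varphi^{-1}:t\mapsto ty\mapsto tx\mapsto t$ yields exactly the same two conditions, so that no extra case needs separate treatment. This is immediate from the fact that $\varphi^{-1}$ is an automorphism of $G$ precisely when $\varphi$ is, but I would mention it explicitly to close the equivalence.
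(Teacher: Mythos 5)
Your proposal is correct and takes essentially the same approach as the paper: you test the same $3$-cycle $\varphi\colon t\mapsto tx\mapsto ty\mapsto t$ against conditions \eqref{eq:cond1} and \eqref{eq:cond2}, reduce \eqref{eq:cond1} to $\gcd(n,\ell+m)=\gcd(n,\ell-m)=2m$, and carry out the same elimination of $y$ via $y^m=x^{(\ell+m)/2}$, with only a harmless reparametrization ($\ell+m=2qm$ instead of the paper's $\ell=(2q+1)m$) before arriving at $2mn\mid(\ell^2+3m^2)$. Your closing remark about the inverse $3$-cycle matches the paper's opening observation, so nothing is missing.
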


\begin{proof}
Observe that such an automorphism exists if and only if there is an automorphism in $\Aut(G;\{t,tx,ty\})$ mapping $t$ to $tx$, $tx$ to $ty$ and $ty$ back to $t$. In this case \eqref{eq:cond1} is equivalent to $\gcd(n,\ell+m) = \gcd(n,\ell-m) = 2m$. This time \eqref{eq:cond2} reads $x^{-m} = (x^{-1}y)^{(\ell+m)/2}$. Assuming \eqref{eq:cond1} holds we have that $2m$ divides $n$ and $\ell = (2q+1)m$ for some $q$ coprime to $n/(2m)$. Then \eqref{eq:thegroupG} implies that \eqref{eq:cond2} is equivalent to 
$$
	x^{qm} = x^{(\ell-m)/2} = y^{(\ell+m)/2} = y^{(q+1)m} = x^{(q+1)^2 m},
$$
which holds if and only if $1 = x^{(q^2+q+1)m} = x^{(\ell^2+3m^2)/(4m)}$.
\end{proof}

\begin{corollary}
\label{cor:2AT}
Let the group $G$ be as in \eqref{eq:thegroupG}. Then $\Aut(G;\{t,tx,ty\}) \cong S_3$ if and only if $n = 2\ell$ and either $\ell = m$ or $\ell = 3m$.
\end{corollary}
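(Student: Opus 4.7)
The plan is to exploit the four preceding lemmas. Since every element of $\Aut(G;\{t,tx,ty\})$ is determined by its action on the three involutions in the connection set, this group embeds in $S_3$, and it equals the whole $S_3$ precisely when it contains both an element of order three and at least one transposition. By Lemma~\ref{le:3-cycle_colors} the presence of an order-three element is equivalent to (c4), and by Lemmas~\ref{le:fixred}, \ref{le:fixblue} and~\ref{le:fixgreen} the three transpositions correspond to (c1), (c2) and (c3) respectively. Since $S_3$ contains all three transpositions, the condition $\Aut(G;\{t,tx,ty\}) \cong S_3$ is equivalent to all four of (c1)-(c4) holding simultaneously.

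For the forward direction I would extract everything from (c3) together with (c4). The gcd part of (c4) gives $2m\mid n$, and combining $2m\mid(\ell+m)$ with $2m\mid(\ell-m)$ forces $\ell=(2s+1)m$ for some integer $s$; setting $k=n/(2m)$, the full gcd condition is equivalent to $\gcd(s,k)=\gcd(s+1,k)=1$. Condition (c3) then says $\ell\in\{0,n/2\}$: the possibility $\ell=0$ is ruled out because it would require $\gcd(n,m)=2m$, impossible for $m\geq 1$, so $\ell=n/2$, giving $k=2s+1$ (in particular $k$ is odd). Substituting into the divisibility in (c4), $2mn\mid\ell^2+3m^2$ becomes $4k\mid (2s+1)^2+3=k^2+3$, which (since $\gcd(4,k)=1$) reduces to $k\mid 3$. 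Hence $k\in\{1,3\}$, yielding $(m,n,\ell)=(m,2m,m)$ or $(m,n,\ell)=(m,6m,3m)$, which is exactly $n=2\ell$ with $\ell\in\{m,3m\}$.

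For the converse I would simply plug the two parameter triples into the four conditions and verify them one by one. For example, in the triple $(m,2m,m)$ one has $\ell^2+3m^2=4m^2=2mn$ and $\gcd(n,\ell\pm m)\in\{\gcd(2m,2m),\gcd(2m,0)\}=\{2m\}$; in $(m,6m,3m)$ one has $\ell^2+3m^2=12m^2=2mn$, $\gcd(6m,4m)=\gcd(6m,2m)=2m$, and the remaining divisibilities $\ell^2\pm 2m\ell-3m^2$ evaluate to $0$ or $\pm 12m^2$, all divisible by $2mn$. Thus in both cases all of (c1)-(c4) hold.

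The only mildly delicate step is the coprimality bookkeeping that turns the gcd conditions in (c4) into the parametrization $\ell=(2s+1)m$ with $s,s+1$ both coprime to $k$; everything else is a short arithmetic calculation.
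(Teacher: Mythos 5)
Your proposal is correct and follows essentially the same route as the paper: combine condition (c3) (ruling out $\ell=0$) with the gcd and divisibility parts of (c4) to force $n\in\{2m,6m\}$ and $\ell=n/2$, then verify the lemma conditions directly for the two resulting families. The only cosmetic difference is that you normalize via $k=n/(2m)$ and reduce to $4k\mid k^2+3$, whereas the paper works with $2mn\mid n^2+12m^2$ directly; the arithmetic is the same.
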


\begin{proof}
Suppose $\Aut(G;\{t,tx,ty\}) \cong S_3$. Then Lemma~\ref{le:fixgreen} implies that $\ell \in \{0,n/2\}$ and Lemma~\ref{le:3-cycle_colors} implies that $\gcd(n,\ell+m) = \gcd(n,\ell-m) = 2m$ and $2mn \mid (\ell^2 + 3m^2)$. It follows that $\ell \neq 0$, and so $\ell = n/2$. Since $2mn$ divides $4(\ell^2 + 3m^2) = n^2 + 12m^2$ and $2m \mid n$, it follows that $2mn \mid 12m^2$, that is $n \mid 6m$. Together with $2m \mid n$ this yields $n \in \{2m, 6m\}$, and so $\ell \in \{m,3m\}$. 

For the converse one simply has to verify that setting $n = 2\ell$ and either $\ell = m$ or $\ell = 3m$ the conditions of Lemma~\ref{le:fixred} and Lemma~\ref{le:fixgreen} are both satisfied, which is clear.
\end{proof}

Under the assumption that Theorem~\ref{the:main1} holds, the results of this section thus prove Theorem~\ref{the:main2} (recall that $\HTG(m,n,\ell)$ is a normal Cayley graph of $G$ from~\eqref{eq:thegroupG} if and only if $\Aut(G;\{t,tx,ty\}) = \Aut(\G)_1$).

%%%%%%%%%%%%%%%%%%%%%%%%%%%%%%
%---------------- Additional automorphisms ---------------
%%%%%%%%%%%%%%%%%%%%%%%%%%%%%%
\section{Additional automorphisms}
\label{sec:additional}

In view of the results of the previous section it remains to classify those $\G = \HTG(m,n,\ell)$ for which $\Aut(\G) \neq \Aut_c(\G)$ and then determine the automorphism groups of these examples. As we shall see there is only a handful of well known small examples and a very specific infinite family of such graphs. We start with an easy observation (recall the definition of the colors from~\eqref{eq:edgecolors}).

\begin{lemma}
\label{le:mix2AT}
Let $m$ and $n$ be positive integers, where $n \geq 4$ is even, let $0 \leq \ell \leq n-1$ be an integer of the same parity as $m$, and let $\G = \HTG(m,n,\ell)$. If $\alpha \in \Aut(\G)$ is such that there is no edge $e$ such that $e$ and $\alpha(e)$ are of the same color, then $\alpha \in \Aut_c(\G)$.
\end{lemma}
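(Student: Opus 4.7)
The plan is to translate the hypothesis into a local statement at each vertex and then propagate it by connectedness. Since each vertex of $\G$ is incident with exactly one edge of each of the three colors, the automorphism $\alpha$ induces at every vertex $v$ a permutation $\sigma_v$ of the color set $\{\Re,\Bl,\Gr\}$, defined by letting $\sigma_v(C)$ be the color of the $\alpha$-image of the unique $C$-colored edge incident with $v$. The assumption that no edge and its image share a color says precisely that each $\sigma_v$ has no fixed points; on a three element set this forces $\sigma_v$ to be one of the two $3$-cycles in the corresponding symmetric group.

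The key step is to check that $\sigma_u = \sigma_v$ whenever $u \sim v$. Indeed, if $\{u,v\}$ is an edge of color $C$, then both $\sigma_u(C)$ and $\sigma_v(C)$ are by definition equal to the color of $\alpha(\{u,v\})$, so the two local permutations agree on $C$. Since two $3$-cycles on a three element set that agree at a single input must coincide, this yields $\sigma_u = \sigma_v$. Connectedness of $\G$ then produces a single permutation $\sigma$ of $\{\Re,\Bl,\Gr\}$ with $\sigma_v = \sigma$ for every vertex $v$, which means $\alpha$ sends every edge of color $C$ to an edge of color $\sigma(C)$. Hence $\alpha$ preserves the partition $\{\Re,\Bl,\Gr\}$ and therefore belongs to $\Aut_c(\G)$.

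There is no real obstacle in this argument. The only substantive ingredients are the elementary facts that the derangements of a three element set are exactly the two $3$-cycles, and that two distinct $3$-cycles in $S_3$ disagree at every point; these, together with connectedness, are all that is needed to align the local color permutations into a single global one.
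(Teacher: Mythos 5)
Your proof is correct and follows essentially the same route as the paper's: the pointwise color permutation at each vertex is a derangement of a three-element set, hence a $3$-cycle determined by a single value, and connectedness propagates a common $3$-cycle across all vertices. You have merely spelled out the adjacency-comparison step that the paper leaves implicit.
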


\begin{proof}
The assumption that $\alpha$ preserves the color of no edge of $\G$ and that the three edges incident to a given vertex have different colors implies that for each vertex $v$ and the three edges incident to it, say $e$, $e'$ and $e''$, knowing the color of one of $\alpha(e)$, $\alpha(e')$ and $\alpha(e'')$ completely determines the color of all three of these images. As $\G$ is connected, the result easily follows. 
\end{proof}

The graphs $\HTG(m,n,\ell)$ are bipartite and are of girth at most $6$. In fact, the product of the three elements from $\{t,tx,ty\}$ in any order (where $G$ is as in~\eqref{eq:thegroupG}) is an involution, and so starting at any vertex of the graph and then following any walk that has the property that each three consecutive edges on it are of three different colors results in a $6$-cycle. We call all such $6$-cycles {\em generic}. The set of all generic $6$-cycles is clearly a $G_L$-orbit and there are precisely two generic $6$-cycles through each edge. 

Before stating and proving the next easy but useful result we review the definition of the {\em generalized prism} $\mathrm{GPr}(n)$ from~\cite{EibJajSpa19}, where cubic vertex-transitive graphs of girth at most $5$ were characterized. The graph $\mathrm{GPr}(n)$, where $n \geq 2$, has vertex set $\{(i, j) \colon i \in \ZZ_2, j \in \ZZ_{2n}\}$, each vertex $(i, j)$ is adjacent to $(i, j\pm 1)$ and in addition $(i, j)$ is adjacent to $(i+1, j+1)$ for all even $j$ (see Figure~\ref{fig:SWr} where the isomorphic graphs $\GPr(4)$, $\HTG(4,4,2)$, $\HTG(1,16,7)$ and $\HTG(2,8,2)$ are depicted). These graphs might also be called the {\em split wreath graphs} as $\mathrm{GPr}(n)$ can be obtained from the well-known {\em wreath graph} $W(n) = \Cay(\ZZ_2 \times \ZZ_n ; \{(0,\pm 1), (1,\pm 1)\})$ by performing the ``splitting'' construction with respect to the cycle decomposition consisting of all the ``natural'' $4$-cycles of $W(n)$ (see~\cite[Construction~11]{PotSpiVer13} for details). 
\begin{figure}[!h]
\begin{center}
	\includegraphics[scale=0.75]{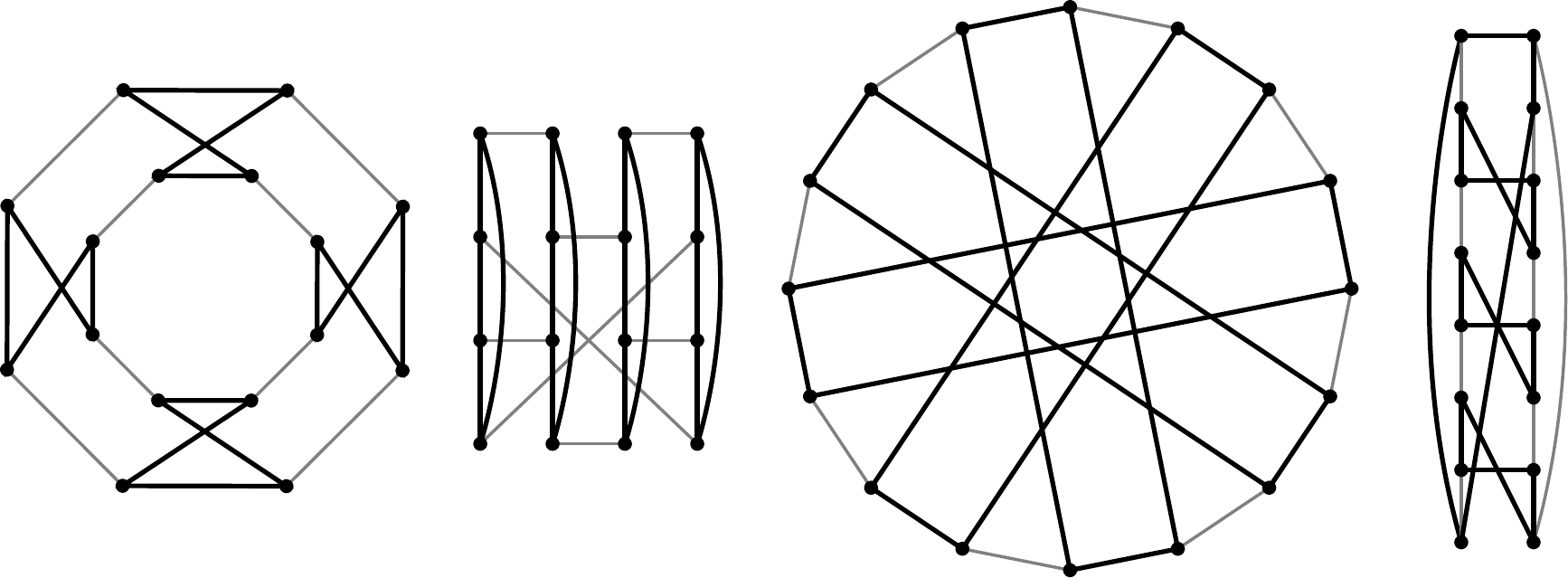}
	\caption{The isomorphic graphs $\GPr(4)$, $\HTG(4,4,2)$, $\HTG(1,16,7)$ and $\HTG(2,8,2)$.}
	\label{fig:SWr}
\end{center}
\end{figure}

\begin{proposition}
\label{pro:fix_one}
Let $m$ and $n$ be positive integers, where $n \geq 4$ is even, let $0 \leq \ell \leq n-1$ be an integer of the same parity as $m$, and let $\G = \HTG(m,n,\ell)$. Let $\Re$, $\Bl$ and $\Gr$ be as in~\eqref{eq:edgecolors}. Then either each automorphism of $\G$ fixing setwise at least one of the sets $\Re$, $\Bl$ and $\Gr$ is color permuting, or $mn$ is divisible by $4$ and $\G \cong \GPr(mn/4)$.
\end{proposition}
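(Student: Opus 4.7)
The plan is to analyze the local behavior of such an $\alpha$ to show that either $\alpha$ is forced to be color-permuting, or the graph is $\GPr(mn/4)$. By the symmetric roles of the three colors, I assume $\alpha$ fixes $\Re$ setwise; the arguments for $\alpha$ fixing $\Bl$ or $\Gr$ are entirely analogous. At each vertex $v$ the red edge at $v$ is mapped to the red edge at $\alpha(v)$, so $\alpha$ permutes the two non-red edges at $v$. Define $\sigma\colon V(\G)\to\{+,-\}$ by $\sigma(v)=+$ if $\alpha$ sends the blue edge at $v$ to the blue edge at $\alpha(v)$, and $\sigma(v)=-$ otherwise. Looking at any blue edge $\{u,v\}$ from each of its endpoints, the single image edge is blue at $\alpha(v)$ iff $\sigma(v)=+$ and blue at $\alpha(u)$ iff $\sigma(u)=+$, so $\sigma(u)=\sigma(v)$. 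The same argument works for green edges, hence $\sigma$ is constant on each connected component of the blue-green subgraph (a disjoint union of cycles of length $2|x^{-1}y|$).

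If $\sigma$ is constant on $V(\G)$, then $\alpha$ either preserves or interchanges the sets $\Bl$ and $\Gr$, so $\alpha\in\Aut_c(\G)$. Otherwise, connectivity yields a red edge $\{v,w\}$ with $\sigma(v)\neq\sigma(w)$. After composing with an element of $G_L\leq\Aut_c(\G)$ I may assume $v=1$, and after possibly swapping the roles of $+$ and $-$ I may also assume $\sigma(1)=+$ and $\sigma(t)=-$, working in the group-theoretic viewpoint from Section~\ref{sec:HTGdef} with the notation of \eqref{eq:thegroupG} and \eqref{eq:edgecolors}. Then $\sigma(1)=+$ forces $\alpha$ to fix each of $t$, $tx$, $ty$, while $\sigma(t)=-$ forces $\alpha$ to swap the blue and green neighbors of $t$, that is, $\alpha(x)=y$ and $\alpha(y)=x$.

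The main step is then to follow the generic $6$-cycle $1-t-x-tx^{-1}y-xy^{-1}-ty-1$ through the red edge $\{1,t\}$ and to compute its image under $\alpha$ vertex by vertex. The $\sigma$-values at the intermediate vertices $x$, $tx^{-1}y$, $xy^{-1}$, $ty$ are determined by tracing the corresponding blue-green cycles back to that of $1$ or $t$, which in turn gives explicit formulas for the images $\alpha(tx^{-1}y)$ and $\alpha(xy^{-1})$. Requiring that the image of the central red edge $\{tx^{-1}y,xy^{-1}\}$ be a red edge of $\G$ (as it must, since $\alpha$ fixes $\Re$) will force the group identity $x^2=y^2$, equivalently $|xy^{-1}|\leq 2$. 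Via \eqref{eq:orders} this leaves, after discarding the degenerate parameters for which the construction fails to yield a simple cubic graph, only the two families $\HTG(1,4n',2n'-1)$ and $\HTG(2,2n',2)$, both of which are isomorphic to $\GPr(mn/4)$ (cf.\ Figure~\ref{fig:SWr}). The analogous analyses for $\alpha$ fixing $\Bl$ or $\Gr$ setwise yield the identities $y^2=1$ and $x^2=1$, respectively, and lead to the same conclusion; in particular $x^2=1$ forces $n=4$, in which case every $\HTG(m,4,\ell)$ is $\GPr(m)$. The principal obstacle will be the careful bookkeeping of $\sigma$-values along the chosen $6$-cycle and the ensuing derivation of $x^2=y^2$, which hinges on several applications of the defining relations in \eqref{eq:thegroupG}.
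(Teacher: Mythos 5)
Your proposal is correct, but it reaches the conclusion by a genuinely different mechanism than the paper. The paper's proof is a rigidity argument: it first shows that, unless $\G$ is a generalized prism, any $\alpha$ with $\alpha(\Re)=\Re$ fixing a vertex and all three of its neighbors must be the identity (propagating ``fixed pointwise'' from one blue--green cycle to the next through the red edges and the generic $6$-cycles, using that these cycles have length at least $6$), and then observes that any $\alpha$ fixing $\Re$ either swaps $\Bl$ with $\Gr$ outright or fixes some blue edge, in which case composing with a translation reduces to the rigidity statement and $\alpha \in G_L$. You instead introduce the sign function $\sigma$, note it is constant on blue--green cycles, and extract from a single ``bad'' red edge the relation $x^2=y^2$ via one explicit generic $6$-cycle computation; I checked that computation ($\alpha(tx^{-1}y)=ty^{-1}x$, $\alpha(xy^{-1})=xy^{-1}$, and the red-edge condition forcing $y^{-1}x=x^{-1}y$) and it is right, as is the ensuing enumeration $|x^{-1}y|\le 2$ forcing $m\le 2$ and the two stated families (and $y^2=1$, $x^2=1$ for the other two colors). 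What your route buys is a clean algebraic certificate for the exceptional graphs and a uniform treatment of the case where the blue--green subgraph is a single Hamilton cycle, which the paper must dispose of separately; what the paper's route buys is the stronger local-rigidity statement (trivial stabilizer of a vertex plus its neighbors), which is structurally useful beyond this proposition. Two small points to tighten: composing with $G_L$ lets you assume the bad red edge is $\{1,t\}$ \emph{and} $\alpha(1)=1$ (you need both, and you should say you compose on both sides, or note that you choose the base endpoint of the bad edge so that $\sigma$ takes the value $+$ there rather than ``swapping the roles of $+$ and $-$''); and the final identification of the parameter families with $\GPr(mn/4)$ relies on the same isomorphisms $\HTG(1,4n',2n'-1)\cong\HTG(2,2n',2)\cong\GPr(n')$ that the paper states and leaves to the reader, so you should either cite that or verify it.
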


\begin{proof}
Suppose $\G$ is not a generalized prism. We show that in this case each $\alpha \in \Aut(\G)$ fixing the set $\Re$ setwise is color permuting (the possibilities that it fixes setwise $\Bl$ or $\Gr$ are dealt with analogously). 

To this end we first show that if $\alpha \in \Aut(\G)$ with $\alpha(\Re) = \Re$ fixes a vertex and all of its neighbors, then $\alpha = 1$. Let $\alpha$ be such an automorphism and let $g$ be a corresponding vertex. Then $\alpha$ fixes the entire blue-green cycle $C$ (the edges are alternatingly blue and green) containing $g$ pointwise. It thus fixes each neighbor of $gtx$ as well as each neighbor of $gty$. If $C$ is of length $mn$ we are done. So suppose this is not the case, let $2s$ be the length of $C$ and consider the blue-green cycle $C'$ containing $gt$. Because of the generic $6$-cycles there is a set of at least $s$ red edges joining the vertices of $C$ to $C'$ (if $mn = 4s$ there are $2s$ of them). As $\G$ is not a generalized prism we have $s \geq 3$, but then the fact that $C$ is fixed pointwise implies that $C'$ must also be fixed pointwise. Therefore, if $\alpha$ fixes $g$ and all of its neighbors, it also fixes the neighbors of each of the three neighbors of $g$. As $\G$ is connected, $\alpha = 1$. 

To complete the proof let $\alpha \in \Aut(\G)$ with $\alpha(\Re) = \Re$. If $\alpha(\Bl) = \Gr$ there is nothing to prove, so assume there is some $e \in \Bl$ with $\alpha(e) \in \Bl$. Let $g , h \in G$ be such that $e = \{g, gtx\}$ and $\alpha(g) = h$. Then $\alpha(gtx) = htx$, and so letting $\beta \in G_L$ correspond to $gh^{-1}$ the automorphism $\beta\alpha$ fixes $g$ and each of its three neighbors. As $\beta\alpha(\Re) = \Re$, the above argument implies that $\alpha = \beta^{-1} \in \Aut_c(\G)$. 
\end{proof}

It thus makes sense to first identify the $\HTG$ graphs (of girth $4$) which are isomorphic to a generalized prism graph. As was pointed out by Alspach~\cite{Als??} the $\HTG$ graphs of girth $4$ are easily identified (recall that we loose nothing by assuming $\ell \leq n/2$). 

\begin{proposition}\cite[Theorem 5.1]{Als??}
\label{pro:girth4}
Let $m$ and $n$ be positive integers, where $n \geq 4$ is even, let $0 \leq \ell \leq n/2$ be an integer of the same parity as $m$, and let $\G = \HTG(m,n,\ell)$. Then $\G$ is of girth $4$ if and only if one of the following holds:
\begin{itemize}\itemsep = 0pt
\item[(i)] $n = 4$;
\item[(ii)] $m = 1$, $n \geq 6$ and $\ell = 3$;
\item[(iii)] $m = 1$, $n \geq 6$, $n \equiv 2 \pmod{4}$ and $\ell = n/2$;
\item[(iv)] $m = 1$, $n \geq 8$, $n \equiv 0 \pmod{4}$ and $\ell = (n-2)/2$;
\item[(v)] $m = 2$, $n \geq 6$ and $\ell \in \{0,2\}$.
\end{itemize}
\end{proposition}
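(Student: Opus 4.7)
The plan is to work in the Cayley picture $\Cay(G;\{t,tx,ty\})$ and, by vertex-transitivity, to count $4$-cycles through the identity. Since $\G$ is bipartite cubic with generic $6$-cycles through every edge, its girth lies in $\{4,6\}$, so it suffices to decide when a $4$-cycle exists. Because each of the three matchings $\Re$, $\Bl$, $\Gr$ can contribute at most two edges to a given $4$-cycle (two same-colored edges sharing a vertex would violate cubicity), the color multiset of a $4$-cycle is either $(2,2)$ or $(2,1,1)$.

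In the $(2,2)$ case the $4$-cycle alternates two colors, so $(s_1s_2)^2=1$ for some distinct $s_1,s_2\in\{t,tx,ty\}$. By~\eqref{eq:orders} this amounts to one of the orders $|x|$, $|y|$, $|x^{-1}y|$ being equal to $2$ (the value $1$ is excluded since $\G$ is cubic). The first forces $n=4$, giving case~(i). For the other two, the divisibility $\gcd(n,\ell\pm m)=mn/2\mid n$ forces $m\in\{1,2\}$, and an enumeration respecting $0\le\ell\le n/2$ and $\ell\equiv m\pmod 2$ yields case~(iv) (from $|y|=2$ with $m=1$) and the $\ell=2$ portion of case~(v) (from $|x^{-1}y|=2$ with $m=2$); any remaining candidate either collapses into case~(i) or corresponds to a non-cubic graph and is discarded.

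For the multiset $(2,1,1)$ the two edges of the doubled color must sit at opposite positions of the $4$-cycle, and computing $s_1s_2s_3s_4$ using $txt=x^{-1}$, $tyt=y^{-1}$ and $xy=yx$ shows that such a $4$-cycle exists if and only if one of the relations $xy=1$, $y=x^2$, or $x=y^2$ holds in $G$, according to whether the doubled color is $\Re$, $\Bl$, or $\Gr$. Each of these is an equation in the abelian subgroup $\la x,y\ra$, which one decides by comparing exponents modulo the defining relations $x^{n/2}=1$, $y^m=x^{(\ell+m)/2}$ (equivalently, by inspecting the Smith normal form of the $2\times 2$ relations matrix). One finds that $xy=1$ and $y=x^2$ both force $m=1$ with $\ell\equiv\pm 3\pmod n$, collapsing in normal form to $\ell=3$ (case~(ii)), while $x=y^2$ forces either $m=1$ with $\ell=n/2$ and $n\equiv 2\pmod 4$ (case~(iii)), or $m=2$ with $\ell=0$ (the remaining portion of~(v)).

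The converse is immediate: for each listed case one exhibits an explicit $4$-cycle in terms of the generators, e.g.\ $(t,tx,ty,tx)$ when $y=x^2$ or $(tx,t,ty,t)$ when $xy=1$. The main obstacle I anticipate is keeping the enumerations in the $(2,2)$ and $(2,1,1)$ cases both exhaustive and disjoint; this reduces to modular bookkeeping in the abelian part of $G$ under the normal-form constraints on $\ell$, which is routine but must be done carefully so that no sporadic parameter triple is missed.
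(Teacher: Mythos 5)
Your proposal is correct, and the computations it sketches all check out. There is, however, nothing in the paper to compare it against: the paper does not prove this proposition but quotes it from Alspach's survey \cite[Theorem~5.1]{Als??}, so you have in effect supplied an independent proof. Your route is the natural one given the paper's toolkit: work in $\Cay(G;\{t,tx,ty\})$, use vertex-transitivity to look only at $4$-cycles through $1$, observe that since each of $\Re,\Bl,\Gr$ is a perfect matching the color pattern of a $4$-cycle is $(2,2)$ or $(2,1,1)$, and translate each pattern into a relation in $G$. The $(2,2)$ pattern gives $|x|=2$, $|y|=2$ or $|x^{-1}y|=2$, which via \eqref{eq:orders} and $\gcd(n,\ell\pm m)=mn/2\mid n$ yields $n=4$, case (iv) (with $n=4$ absorbed into (i)) and the $\ell=2$ part of (v); the $(2,1,1)$ pattern gives $xy=1$, $y=x^2$ or $x=y^2$, whose solutions in $\la x,y\ra$ under $0\le\ell\le n/2$ are exactly (ii), (iii) and the $\ell=0$ part of (v). Two small points deserve care in a write-up. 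First, $xy=1$ by itself forces $\ell\equiv -3\pmod n$, which under the normal-form constraint $\ell\le n/2$ survives only for $n\le 6$; it is the relation $y=x^2$ (equivalently $\ell\equiv 3\pmod n$) that produces case (ii) for all $n\ge 6$, so the phrase ``collapsing in normal form to $\ell=3$'' should be unpacked along these lines rather than by appealing to the isomorphism $\HTG(m,n,\ell)\cong\HTG(m,n,n-\ell)$, since the enumeration is being carried out for a fixed normal-form triple. Second, the degenerate parameters where two of $t,tx,ty$ coincide (e.g.\ $m=1$, $\ell=1$, where $x=y$) should be excluded explicitly at the outset, as you implicitly do when discarding order-$1$ products. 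Neither point is a gap in substance; the enumeration is exhaustive and reproduces precisely the five listed cases.
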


It is not difficult to see that for each $m \geq 2$ and $\ell \in \{0,1,2\}$, where $m$ and $\ell$ are of the same parity, $\HTG(m,4,\ell) \cong \GPr(m)$. Similarly, $\HTG(1,n,(n-2)/2) \cong \GPr(n/4)$ for each $n \geq 8$ with $n \equiv 0 \pmod{4}$. Finally, $\HTG(2,n,2) \cong \GPr(n/2)$ for each even $n \geq 6$. We leave the easy verifications of these claims to the reader (but see Figure~\ref{fig:SWr}). It is also not difficult to see that none of the remaining graphs from Proposition~\ref{pro:girth4} is a generalized prism graph. In fact, for each even $n \geq 6$ the graphs $\HTG(2,n,0)$ and $\HTG(1,2n,3)$ are both isomorphic to the well-known prism graph $\mathrm{Pr}(n) = \Cay(\ZZ_2 \times \ZZ_n ; \{(1,0), (0,\pm1)\})$ of order $2n$, while for each odd $n \geq 3$ the graphs $\HTG(1,2n,n)$ and $\HTG(1,2n,3)$ are both isomorphic to the well-known M\"obius ladder $\mathrm{Ml}(n) = \Cay(\ZZ_{2n} ; \{\pm 1, n\})$ of order $2n$. With the exception of $\mathrm{Ml}(3) \cong K_{3,3}$ (which clearly admits automorphisms that are not color permuting) all of these prisms and M\"obius ladders have edges of two different types regarding the $4$-cycles - one third of them lie on two $4$-cycles each, while the remaining ones lie on one $4$-cycle each. This clearly shows that the vertex stabilizers in the automorphism group of each of these graphs $\G$ are of order $2$, and so Lemmas~\ref{le:fixred}-~\ref{le:fixgreen} imply that for these graphs $\Aut(\G) = \Aut_c(\G)$.
\medskip

For the rest of this section we thus assume that the graphs $\HTG(m,n,\ell)$ we are dealing with are of girth $6$. Suppose then that $\G = \HTG(m,n,\ell)$ is of girth $6$ and that $\alpha \in \Aut(\G)$ is not color permuting. By Proposition~\ref{pro:fix_one} none of the sets $\Re$, $\Bl$ and $\Gr$ is preserved by $\alpha$, and so $\G$ is arc-transitive. Moreover, as there is a set in $\{\Re, \Bl, \Gr\}$, say $\Re$, such that there exist edges $e, e' \in \Re$ with $\alpha(e)$ and $\alpha(e')$ being of different colors, there in fact exists a $3$-path $P$ whose initial and terminal edge $e$ and $e'$ are both red but $\alpha(e)$ and $\alpha(e')$ are of different colors. But then $\alpha(P)$ has all three edges of different colors and thus lies on a generic $6$-cycle, implying that $P$ also lies on a $6$-cycle. Thus, $\G$ has $6$-cycles other than the generic ones, proving that there are at least three $6$-cycles through each edge of $\G$. In fact, since Lemma~\ref{le:mix2AT} clearly implies that multiplying $\alpha$ by an appropriate element of $G_L$ gives an automorphism of $\G$ fixing the vertices $1$ and $t$ but interchanging $tx$ and $ty$, the graph $\G$ is $2$-arc-transitive, and so each $3$-path lies on a $6$-cycle. It is now simply a matter of determining all $\HTG$ graphs with this property. Given the special structure of the $\HTG$ graphs this is fairly easy to do (for instance, considering a red-blue-red $3$-path yields $n = 6$ or $m \leq 2$, and then considering a red-green-red $3$-path yields that $m > 2$ only for $\HTG(3,6,3)$), but instead of describing the argument we simply rely on a more general result from the literature. Namely, it follows from~\cite[Lemma~5.3]{ConZhoFenZha20} or \cite[Theorem~1]{PotVid??} (but see also~\cite{KutMar09}) that the graph $\G$ must be the Heawood graph, the Pappus graph or the M\"obius-Kantor graph (it can easily be verified that the Desargues graph is not a $\HTG$ graph). We are now ready to prove Theorem~\ref{the:main1}.
\medskip

\noindent
{\em Proof of Theorem~\ref{the:main1}:} Recall that $\G = \HTG(m,n,\ell)$ is not a normal Cayley graph of $G$ if and only if $\Aut(\G) \neq \Aut_c(\G)$. By the previous paragraph the only possible examples of girth $6$ are the Heawood graph, the Pappus graph and the M\"obius-Kantor graph. The automorphism groups of these graphs are well known, while the fact that these are indeed precisely the $\HTG$ graphs given in the theorem is easily verified. Moreover, as $\Aut(G;\{t,tx,ty\}) \leq S_3$ the only one of these that could possibly be normal with respect to $G$ is the M\"obius-Kantor graph $\HTG(1,16,5) \cong \HTG(2,8,4)$. However, by Corollary~\ref{cor:2AT} this graph is also not a normal Cayley graph of $G$. 

The discussion from the paragraph following Proposition~\ref{pro:girth4} shows that the only candidates for the girth $4$ $\HTG$ graphs that are not normal Cayley graphs of $G$ are the ones given in the theorem. That $K_{3,3}$ and the cube graph are indeed isomorphic to the $\HTG$ graphs stated in the theorem is easily verified, while their automorphism groups are well known. Again, Corollary~\ref{cor:2AT} shows that the cube graph is not a normal Cayley graph of $G$. Finally, note first that for $m \geq 3$ the edges of $\GPr(m)$ are of two different types. One third of them lies on no $4$-cycle while each of the remaining ones lies on a unique $4$-cycle. The ones that lie on no $4$-cycle thus clearly constitute an orbit of $\Aut(\GPr(m))$, which shows that $\Aut(\GPr(m))$ is isomorphic to the automorphism group of the corresponding wreath graph $W(m)$ (see also~\cite{PotSpiVer13}). The later can be determined easily and can be seen to be generated by an elementary abelian $2$-group of rank $m$ and a dihedral group of order $2m$ (but see for instance~\cite{PraXu89}). Since the vertex stabilizers are of order $2^{m-1}$ and $m \geq 3$, the fact that $\Aut(G;\{t,tx,ty\}) \leq S_3$ implies that this graph is not a normal Cayley graph of $G$.
\hfill $\square$

\section*{Acknowledgements}

\noindent
The author acknowledges support by the Slovenian Research Agency (research program P1-0285 and research projects J1-9108, J1-9110, J1-1694, J1-1695, J1-2451).

\end{document}